\documentclass[12pt,reqno]{amsart}
\usepackage{amssymb}

\textwidth=15cm \textheight=23cm \oddsidemargin=0.5cm
\evensidemargin=0.5cm

\newcommand{\e}{\varepsilon}

\newcommand{\la}{\lambda}
\newcommand{\al}{\alpha}

\newcommand{\fy}{\varphi}

\newcommand{\p}{\partial}
\newcommand{\I}{\infty}

\newcommand{\R}{\mathbb{R}}

\newcommand{\N}{\mathbb{N}}

\numberwithin{equation}{section}

\newtheorem{thm}{Theorem}[section]

\newtheorem{lem}[thm]{Lemma}

\theoremstyle{remark}

\newcommand{\ran}{\rangle}
\newcommand{\lan}{\langle}

\newcommand{\weak}[1]{{\text{w-}#1}}
\newcommand{\lec}{\lesssim}
\newcommand{\gec}{\gtrsim}

\newcommand{\EQ}[1]{\begin{equation} \begin{split} #1
 \end{split} \end{equation}}

\newcommand{\Del}[1]{}

\newcommand{\pt}{&}
\newcommand{\pr}{\\ &}
\newcommand{\pq}{\quad}
\newcommand{\pn}{}

\newcommand{\prq}{\\ &\quad}

\newcommand{\LR}[1]{{\lan #1 \ran}}
\newcommand{\de}{\delta}

\newcommand{\ka}{\kappa}
\newcommand{\ga}{\gamma}

\newcommand{\na}{\nabla}

\renewcommand{\th}{\theta}

\newcommand{\De}{\Delta}
\newcommand{\Om}{\Omega}

\newcommand{\IN}[1]{\text{ in }#1}

\newcommand{\NN}{\mathcal{N}}

\newcommand{\m}{m}

\author{L.~ Aloui}
\address{Department of Mathematics, University of Bizerte, Bizerte, Zarzouna 7021, Tunisia.}
\email {\it Lassaad.Aloui@fsg.rnu.tn}
\thanks{L.~A.~is partially  supported  by LAMSIN}

\author{S.~Ibrahim}
\address{Department of Mathematics and Statistics,\\University of Victoria\\
 PO Box 3060 STN CSC\\   Victoria, BC, V8P 5C3\\ Canada}
\email{ibrahim@math.uvic.ca} \urladdr{
http://www.math.uvic.ca/~ibrahim/}
\thanks{S.~I.~is partially supported by NSERC\# 371637-2009 grant and a start up fund from University of Victoria}

\author{K.~Nakanishi}
\address{Department of Mathematics, Kyoto University} \email {\it n-kenji@math.kyoto-u.ac.jp}
\thanks{K.N.~is partially supported by Grant-in-Aid for Scientific Research 21740095}

\title[Exponential energy decay for damped NLKG]{Exponential energy decay for \\ damped Klein-Gordon equation with \\ nonlinearities of arbitrary growth}
\date{\today}

\begin{document}
\begin{abstract}
We derive a uniform exponential decay of the total energy for the nonlinear Klein-Gordon equation with a damping around spatial infinity in $\R^N$ or in the exterior of a star-shaped obstacle.  
Such a result was first proved by Zuazua \cite{Z,Zua91} for defocusing nonlinearity with moderate growth, and later extended to the energy subcritical case by Dehman-Lebeau-Zuazua \cite{DLZ}, using linear approximation and unique continuation arguments.
We propose a different approach based solely on Morawetz-type {\it a priori} estimates, which applies to defocusing nonlinearity of arbitrary growth, including the energy critical case, the supercritical case and exponential nonlinearities in any dimensions. 
One advantage of our proof, even in the case of moderate growth, is that the decay rate is independent of the nonlinearity. 
We can also treat the focusing case for those solutions with energy less than the one of the ground state, once we get control of the nonlinear part in Morawetz-type estimates. 
In particular this can be achieved when we have the scattering for the undamped equation.
\end{abstract}

\subjclass[2000]{35L70, 35Q55, 35B60, 35B33, 37K07}

\keywords{Nonlinear Klein-Gordon equation, energy decay, stabilization}

\maketitle

\tableofcontents 


\section{Introduction}
We study total energy decay for the damped nonlinear Klein-Gordon equation (NLKG) for 
$u(t,x):[0,\I)\times\Om\to\R$, 
\EQ{\label{dNLKG}
 u_{tt} + a(x) u_t -\De u + u + f'(u)=0 \pq (t,x)\in [0,\I)\times\Om,\\
 u = 0 \pq (t,x)\in(0,\I)\times\p\Om,}
where $\Om$ is a $C^1$ (exterior) domain in $\R^N$ ($N\ge 1$), the damper $a:\Om\to[0,\I)$ is effective uniformly around the spatial infinity 
\EQ{ \label{asm a}
 a(x)\ge 0, \pq a \in L^\I(\Om), \pq  \liminf_{|x|\to\I} a(x)> 0,}
and the nonlinear energy $f:\R\to\R$ is $C^2$ satisfying\footnote{These assumptions are quite natural for the nonlinear Klein-Gordon energy. $f(0)=0$ is necessary to have finite energy, $f'(0)=0$ follows from the evenness, while $f''(0)=0$ means that the mass term is exactly $u$. It can be replaced with $mu$ ($m>0$), but in the massless case $m=0$ we do not have the exponential decay. See e.g. \cite{Nakao} for the algebraic decay in the massless case.} 
\EQ{
 f(u)=f(|u|), \pq f(0)=f'(0)=f''(0)=0.} 
This equation may be regarded as a simple model for nonlinear dissipative hyperbolic equations with purely dispersive spatial regions. The stabilization problem consists in deriving sufficient conditions on the dissipation $a(x)$ 
guaranteeing uniform decay of the total energy, defined by  
\EQ{ \label{energy}
 E(u;t) :=\int_\Om |u_t|^2 + |\na u|^2 + |u|^2 + 2 f(u) dx.}
When the nonlinear energy is not positive definite in the whole energy space, we have to distinguish it from the free energy 
\EQ{\label{free energy}
 E_F(u;t) := \int_{\R^N} |\dot u|^2 + |\na u|^2 + |u|^2 dx.}
Multiplying \eqref{dNLKG} by $\dot u$ and integrating over
the slab $(t_{1},t_{2})\times\Om$, one formally gets the energy identity 
\EQ{ \label{decay} 
 E(u;t_{2})-E(u;t_{1})
  =-2\int_{t_{1}}^{t_{2}}\int_{\Om}a(x)|\dot u(t,x)|^{2}dxdt.}
Heuristically, one achieves the stabilization when certain portion of the energy keeps flowing into the kinetic part on the region of effective dissipation. Then one typically obtains exponential decay in the form
\EQ{ \label{unif dec 0}
 E(u;t) \le Ce^{-\ga t}E(u;0),}
with some constants $C,\ga>0$. 

The two major factors in this problem are the geometric structure of the domain, the boundary condition and the dissipation on one hand, and the nonlinear interaction on the other hand. 
The geometric aspects have been extensively studied especially in the linear setting, with deep insights into wave propagation properties, see e.g., \cite{Lions,RauchTaylor,BLR}. 

However the nonlinear interaction becomes the bigger obstruction for the finer investigation of the wave propagation. 
The standard strategy is to start with some fairly weak decay properties which are accessible even in the nonlinear setting, and then through some limiting procedures and approximation arguments, to reduce the problem to the linear case, where much finer informations are available.  
In this type of arguments, the linear approximation poses essential limitations on the generality of the nonlinear interaction. 
We refer to \cite{Z,Zua91,D,DLZ,DG,AC} for the stabilization results in the current setting \eqref{dNLKG}.

\subsection{Defocusing case}
Our observation is that one can skip the latter part of the argument, i.e. the linear approximation, at least in the simplest geometric setting, thereby obtaining the exponential decay for fairly general nonlinearity with the repulsive sign. 
Our nonlinearity condition is a sort of self-coerciveness, given in terms of 
\EQ{ \label{the g}
 g(u) := uf'(u) - 2f(u).}
The first main result of this paper is 
\begin{thm}[defocusing case]
\label{Mainresult1} 
Let $\Om\subset\R^N$ ($N\in\N$) be either the whole space or the exterior of a star-shaped obstacle with a $C^1$ boundary.  
Assume that $a:\Om\to\R$ satisfies for some constants $M,R,a_0>0$ 
\EQ{ \label{asm a prec}
  0\le a \le M, \pq \inf_{|x|>R}a(x)\ge a_0,}
$f:\R\to\R$ is $C^2$ satisfying $f(0)=f'(0)=f''(0)=0$, $f(u)=f(|u|)$, and 
\EQ{\label{condition f} 
  g(u)\ge 0, \pq 0\leq f(u) \le C_0(|u|^2+g(u)),}
for some constant $C_0>0$. Then there exists $\ga>0$ determined only by $N$, $M$, $R$, $a_0$ and $C_0$, such that  
for every initial data $(u(0,x),u_t(0,x))$ with finite energy $E(u;0)<\I$, there exists a global weak solution $u$ of \eqref{dNLKG} satisfying 
\EQ{ \label{exp dec}
 E(u;t) \le 2e^{-\ga t}E(u;0)}
for all $t\ge 0$. If \eqref{condition f} is weakened to 
\EQ{\label{condition f2}
 g(u) \ge 0, \pq 0\le f(u) \le C_0(|u|^2 + |u|^q + g(u))}
with some $2\le q\le 2^\star:=2N/(N-2)$ ($2\le q<\I$ if $N\le 2$), then \eqref{exp dec} still holds, but $\ga>0$ depends (non-increasingly) on $E(u;0)$ and $q$ too.    
\end{thm}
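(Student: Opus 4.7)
\emph{Plan of proof.}
The plan is to reduce \eqref{exp dec} to a one-period observability $E(u;T)\le\alpha E(u;0)$ with $\alpha<1$ for some fixed $T>0$ depending only on $N,M,R,a_0,C_0$, and then iterate. The observability will be extracted entirely from multiplier identities, without any linear approximation or unique-continuation argument.

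I first construct a global weak solution with $E(u;\cdot)$ non-increasing by Galerkin approximation; the non-negativity $f\ge 0$ (a consequence of $f(0)=f'(0)=f''(0)=0$ and $g\ge 0$, via $(f(u)/u^2)'=g(u)/u^3$ for $u>0$) makes $E$ coercive on $H^1\times L^2$, so the limit is standard. The $\dot u$-multiplier yields \eqref{decay}, hence $\int_0^T\!\int_\Om a|\dot u|^2\,dx\,dt=\tfrac12(E(u;0)-E(u;T))$. All subsequent multiplier identities are first derived on smooth approximants and then passed to the limit.

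The core of the argument is a combined Morawetz--equipartition identity. The radial Morawetz multiplier $Mu:=x\cdot\na u+\tfrac{N}{2}u$, smoothly truncated for $|x|\to\I$ when $\Om=\R^N$, produces after integration by parts
\begin{equation*}
\int_0^T\!\!\int_\Om\!\bigl(|\na u|^2+\tfrac{N}{2}g(u)\bigr)\,dx\,dt+B=\bigl[-\langle\dot u,Mu\rangle\bigr]_0^T-\int_0^T\!\!\int_\Om a\,\dot u\,Mu\,dx\,dt,
\end{equation*}
where $B\ge 0$ by $x\cdot\nu\le 0$ on the (possibly empty) star-shaped obstacle boundary, and the endpoints are $\le C(E(u;0)+E(u;T))$. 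To also capture a $|u|^2$ term I test against $Mu+\la u$ for a small fixed $\la>0$, which introduces a $\la\int\!\int u^2\,dx\,dt$ contribution on the right. Combining this with the far-field dissipation bound $\int\!\int_{|x|>R}|\dot u|^2\le a_0^{-1}\int\!\int a|\dot u|^2$ and the structural hypothesis $f\le C_0(|u|^2+g)$, and absorbing the damping cross-terms $\int a\dot u\,Mu$ and $\int au\dot u$ by Cauchy--Schwarz at a scale $\eps$ against the positive Morawetz quantities plus $\eps^{-1}\int a|\dot u|^2$, I obtain
\begin{equation*}
\int_0^T E(u;t)\,dt\le C_1\bigl(E(u;0)+E(u;T)\bigr)+C_2\bigl(E(u;0)-E(u;T)\bigr),
\end{equation*}
with $C_1,C_2$ depending only on $N,M,R,a_0,C_0$. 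Since $E$ is non-increasing, $TE(u;T)\le\int_0^T E\,dt$, so for $T$ large enough $\alpha:=(C_1+C_2)/(T-C_1+C_2)<1$; iterating on $[kT,(k+1)T]$ gives \eqref{exp dec}.

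Under the weaker hypothesis \eqref{condition f2}, the Sobolev embedding $H^1\hookrightarrow L^q$ (valid for $q\le 2^\star$) combined with $\|u(t)\|_{H^1}^2\le E(u;0)$ gives $\int\!\int|u|^q\le E(u;0)^{(q-2)/2}\int\!\int|u|^2$, effectively replacing $C_0$ by $C_0(1+E(u;0)^{(q-2)/2})$ in $C_1$ and hence making $\gamma$ depend on $E(u;0)$ and $q$. The main technical obstacle throughout is the Cauchy--Schwarz absorption of the damping cross terms $\int a\dot u\,Mu$ and $\int au\dot u$: the absorption scale $\eps$ must be chosen depending only on $N,M,R,a_0,C_0$, and the spatial truncation of $Mu$ on $\R^N$ must be calibrated to $R$ (not to the unknown $u$), so that the final constants remain universal across arbitrary defocusing nonlinearities.
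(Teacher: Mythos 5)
Your overall strategy (pure multiplier identities, no linearization, no unique continuation) is the same as the paper's, but the specific multipliers you chose cannot close the argument, and the gap sits exactly at the point the paper identifies as the main difficulty: the mass term $\int_0^T\!\int_\Om |u|^2\,dx\,dt$, equivalently the kinetic energy on the undamped region $\{|x|<R\}$. The dilation multiplier $x\cdot\na u+\tfrac N2 u$ produces only $\int\!\int(|\na u|^2+\tfrac N2 g(u))$ — the coefficient of $\int|u|^2$ in that identity is exactly zero — while the virial multiplier $\la u$ gives the equipartition relation
\[
\la\int_0^T\Big(\|\na u\|_{L^2_x}^2+\|u\|_{L^2_x}^2+\int_\Om uf'(u)\,dx\Big)dt=\la\int_0^T\|\dot u\|_{L^2_x}^2\,dt+O(E(u;0))+(\text{damping cross terms}),
\]
which merely exchanges the potential part of the energy for $\int_0^T\|\dot u\|_{L^2_x}^2\,dt$; the latter is not controlled by the dissipation where $a$ may vanish, so adding $\la u$ gains nothing and your claimed bound $\int_0^T E(u;t)\,dt\le C_1(E(u;0)+E(u;T))+C_2(E(u;0)-E(u;T))$ with $T$-independent constants does not follow. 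This is precisely why the earlier literature resorted to unique continuation or compactness arguments. The paper's way around it is a genuinely different pair of tools: (i) the light-cone multiplier $m(u)=\la^{-1}(-t\dot u+x\cdot\na u)+qu$ with $\la=\sqrt{t^2+|x|^2}$, whose coefficients are bounded so that $\|m(u)\|_{L^2_x}\lec E(u;0)^{1/2}$ with no spatial truncation, yielding the weighted estimate \eqref{Mor.est}; and (ii) the weighted Sobolev inequality \eqref{claim}, which converts the quadratic Morawetz term into $\int_S^T\!\int_{|x|<2R}t^{-1}|u|^2\,dx\,dt\lec E(u;0)(R^N\log T)^{1-2/p}$ with $p>2$ — a quantity growing strictly slower in $\log T$ than the lower bound $E(u;T)\log(T/3R)$ obtained from equipartition in \eqref{u^2 whole x}. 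The entire proof hinges on this mismatch of growth rates; your scheme contains nothing that plays that role.

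Two further, independent problems. First, the spatial truncation of $x\cdot\na u$: replacing $x$ by $\psi(x)x$ creates commutator terms in the far region involving $|\na u|^2$, $|u|^2$, $f(u)$ and $g(u)$ with no sign, and these are not absorbed by the dissipation, which only controls $a|\dot u|^2$; calibrating $\psi$ to $R$ does not help, since the energy can concentrate at arbitrarily large $|x|$ at later times (the paper's multiplier needs no truncation for exactly this reason). Second, your existence step: a Galerkin scheme for supercritical $f$ still requires approximating the nonlinearity to pass to the limit in $f'(u)$, and the paper points out that a naive cutoff destroys the structure — $g$ vanishes when $f'$ becomes linear, and the constant $C_0$ in \eqref{condition f} must be preserved uniformly — which is why it performs a two-step truncation of $V=f/|u|^2$ before taking weak limits; your proposal would need an analogous argument to keep the final constants universal.
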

Note that the rate $\ga$ is uniform in the whole energy space under the condition \eqref{condition f}, while under \eqref{condition f2} it is uniform on each ball of the energy space. 
In any dimension $N\geq 1$, examples of the nonlinearity include all defocusing powers and exponentials 
\EQ{ \label{exp1}
 f(u) = \la \exp(\mu|u|^\nu) |u|^{2+\al}, \pq (\la,\mu,\nu,\al \geq 0),}
and their convex combinations. 
The global wellposedness in the energy space is so far available under the growth condition ($H^1$ subcritical or critical case) 
\EQ{ \label{grow p}
 |f'(u_1)-f'(u_2)| \le C[|u_1|+|u_2|]^{p-1}|u_1-u_2|, \pq p+1 \le 2^\star=\frac{2N}{N-2},}
with suitable modifications when $2^\star<3$ ($N>6$), and exponential growth condition for $N=2$ 
(see, e.g.~\cite{GV1,SS,2Dglobal}). 
Once we have global wellposedness, the above uniform decay of course applies to the unique solution,  
but {\it our theorem applies even if we do not know unique existence (i.e.~in the $H^1$ supercritical case)}. 

Let us compare our result with the preceding ones for the stabilization problem of \eqref{dNLKG}.  
 Zuazua in \cite{Z,Zua91} first proved the exponential decay by using the unique continuation result by Ruiz \cite{Ruiz}, in bounded and unbounded domains for nonlinear term $f'$ mapping $H^1$ into $L^2$, i.e.~\eqref{grow p} with $p\le N/(N-2)$, and with $a(x)$ effective on the whole boundary and around the infinity. 
Then Dehman in \cite{D} relaxed the nonlinearity condition by making use of the Strichartz estimates, and propagation of microlocal defect measures, introduced by G\'erard \cite{G1} to describe possible lack of compactness in $H^1$ bounded sequence of linear solutions. 
The latter result was further extended to the full range of $H^1$ subcritical case, i.e.~\eqref{grow p} with $p+1<2^\star$, by Dehman-Lebeau-Zuazua \cite{DLZ}. 
They used a linearization argument enabling them to propagate the possible singularities (lack of compactness) along bi-characteristics to the region where the damper is effective. 
However this linear approximation might fail in the $H^1$ critical case $p+1=2^\star$, see for example G\'erard \cite{G2}. The $H^1$ critical case was finally solved by Dehman-G\'erard \cite{DG} using the nonlinear profile decomposition of Bahouri-G\'erard \cite{BG} instead. 
 
We point out that these proofs do not explicitly provide any bound on the decay rate $\gamma$. 
Recently T\'ebout \cite{Tebou} gave another proof based on a Carleman estimate due to Ruiz \cite{Ruiz}. 
His proof is more quantitative, but also requires the condition $p\leq N/(N-2)$ to bound the nonlinear term in the Carleman estimate. 

Our proof of Theorem \ref{Mainresult1} gives explicit estimates on $\ga$, and moreover it is very elementary, especially compared with the above mentioned arguments. 
It consists mainly of three ingredients: (1) a space-time a priori estimate of the Morawetz type, (2) a weighted Sobolev-type inequality adapted to the Morawetz-type estimate, and (3) an energy equipartition estimate adjusted to the weight of the Morawetz-type estimate. 

The Morawetz space-time integral estimate was originally derived in \cite{Morawetz2} to have some weak decay of local energy for the undamped nonlinear Klein-Gordon equation 
\EQ{ \label{NLKG}
 \ddot u - \De u + u + f'(u) = 0,}
in $\R^3$, and then later applied to the nonlinear scattering theory of the undamped equation by Strauss-Morawetz \cite{StrMora}. 
In this paper we use a modified version of the estimate derived by Nakanishi \cite{Noncoer} for the undamped equation, in the form 
\EQ{
 \iint_{|x|<t/2} \frac{|x\dot u+t\na u|^2}{|(t,x)|^3} + \frac{g(u)}{|t|} dx dt \le CE(u;0),\quad x\in\Omega\in\R^N,\; N\geq1,}
which is available in all spatial dimensions. 
We emphasize that it is essential to use the quadratic term (via the weighted Sobolev inequality), instead of the nonlinear term $g(u)$, in order to get decay estimates independent both of the nonlinearity and of the energy size. 
This idea was first used by Nakanishi in \cite{Noncoer} for the nonlinear scattering theory. 

The energy equipartition property was also used in the preceding works for the stabilization, but the mass energy term $|u|^2$ has prevented them to use it in a completely nonlinear way. Introduction of the Morawetz-type weight as well as the weighted Sobolev inequality enables us to control that problematic term directly. 

However, our reliance on the Morawetz-type estimate brings certain limitations of our approach. 
Since it represents the global dispersive property of the solutions, it seems difficult to use in bounded domains, or more precisely in presence of trapped rays. 
This is a big disadvantage compared with the preceding works, which equally apply to bounded domains (provided that one has enough linear estimates). 

Another, more technical restriction in our theorem is the condition \eqref{condition f} posed on $g$. 
The preceding works require similar conditions to get the global stabilization (namely a decay rate independent of the energy size), but they can also work with weaker conditions such as $uf'(u)\ge 0$ for local stabilization (decay rate dependent of the energy size).


An example of $f$ which satisfies \eqref{condition f2} but not \eqref{condition f} is given by
\EQ{
 f(u)=|u|^2\log(1+|u|^2),}
i.e.~a logarithmic perturbation of a linear term, 
whereas similar perturbations of superlinear powers satisfy \eqref{condition f}, for example 
\EQ{
 f'(u) = u^5\log(1+|u|^2),}
which was considered by Tao in \cite{Tao} as an $H^1$ supercritical case in $\R^3$, getting global solutions for smooth radial initial data (cf. \cite{Roy} for the non radial case). 

\subsection{Focusing case}
In Theorem \ref{Mainresult1} for the defocusing case, our assumption \eqref{condition f} on $f$ is used only to discard nonlinear terms in the Morawetz-type estimates. 
That term can be controlled, if we have the nonlinear scattering for the undamped NLKG \eqref{NLKG}, and a global perturbation argument for the Strichartz norms. The first argument provides us with global bounds on the Strichartz space-time norms of the undamped equation, while the second one allows us to transfer the global Strichartz bounds to the damped equation. The two arguments are indeed satisfied, even for a focusing nonlinearity, if we restrict to the solutions with energy less than the ground state (a solution of the static equation with possibly a modified mass) and inside ``the potential well". 
We should point out, however, that the previous works in the defocusing case \cite{Zua91,D,DLZ,DG} could also extend to the same setting, as far as the nonlinearity satisfies their growth conditions. See \cite{FW} for a result for the wave equation on a bounded domain. 

For simplicity of presentation, we consider only the whole space as the domain, and typical nonlinearities of one of the following examples, though our argument applies to more general nonlinearities.
\begin{enumerate}
\item
$N\ge 1$ and $f$ is $H^1$ subcritical: We assume 
\EQ{\label{f sub}
 f(u) = \la_1 |u|^{p_1}+ \cdots + \la_k |u|^{p_k},}
for some $\la_1,\dots,\la_k>0$ and $2_\star<p_1<\dots<p_k<2^\star$, where $2_\star=2+4/N$ is the $L^2$ critical power and $2^\star=2N/(N-2)$ is the $H^1$ critical one. 
\item $H^1$ critical case. We assume $N\ge 3$ and for some $\la$, 
\EQ{ \label{f crit}
  f(u) = |u|^{2^\star}.}
\item $2D$ exponential case. We assume that $N=2$ and, 
\EQ{\label{f exp}
 \pt f(u) = e^{4\pi|u|^2}-1-4\pi|u|^2-(4\pi|u|^2)^2/2.}
\end{enumerate}
Note that the scattering for the lower critical power $2_\star$ is still open for large data even in the defocusing case, and the subtracted $|u|^4$ term in \eqref{f exp} corresponds to that power. 

Global solutions to \eqref{NLKG} do not always exist in these cases. On the one hand, when the data is small in the energy space, it leads to a global solution which scatters. On the other hand, when the data is sufficiently large, then the solution can blow up in finite time, for example if the energy is negative. The threshold between these two dynamics can be constructed as follows.  

Let $J$ be the static energy functional given by
\EQ{
 J(\fy) := \int_{\R^d} [|\na \fy|^2 + |\fy|^2] dx - 2F(\fy), \pq F(\fy):=\int_{\R^d} f(\fy) dx,}
and denote its derivative with respect to amplification by 
\EQ{
 K(u) := 2\int_{\R^N} |\na u|^2 + |u|^2 - uf'(u) dx.}
Consider the constrained minimization problem 
\EQ{ \label{min J}
 { \m} = \inf\{J(\fy) \mid 0\not=\fy \in H^1(\R^d),\ K(\fy)=0\},}
and define 
\EQ{\label{The Set K}
 \pt {\mathcal K}^+=\{(u_0,u_1)\in H^1(\R^N)\times L^2(\R^N) \mid 
  E(u_0,u_1)<{m},\ K(u_0)\ge 0\}, 
 \pr {\mathcal K}^-=\{(u_0,u_1)\in H^1(\R^N)\times L^2(\R^N) \mid 
  E(u_0,u_1)<{m},\ K(u_0)< 0\}. }
Then for the undamped NLKG
\EQ{\label{NLKG}
 u_{tt} - \De u + u + f'(u)=0,}
the solutions with energy below $m$ are divided as follows: 
\begin{itemize}
\item If $(u(0),u_t(0))\in {\mathcal K}^+$, then the solution $u$ of \eqref{NLKG} is global. 
\item If $(u(0),u_t(0))\in {\mathcal K}^-$, then the solution $u$ of \eqref{NLKG} blows up in finite time. 
\end{itemize}
In the subcritical case, this follows essentially from Payne-Sattinger \cite{PaSa} and the local wellposedness in the energy space. 

Recently, Ibrahim-Masmoudi-Nakanishi \cite{IMN} extended it to the critical and 2D exponential cases. Moreover, they proved that in all the cases (1)-(3), the solutions of \eqref{NLKG} in $\mathcal{K}^+$ scatter as $t\to\pm\I$ with global Strichartz bounds, implementing Kenig-Merle's argument \cite{KM1,KM2} to the equation without scaling invariance. In addition, they gave the following characterization of $m$ (see \cite[Proposition 1.2]{IMN}), which is again well known in the subcritical case: {\it There exist $Q(x)$ and $c\in[0,1]$ such that  
\EQ{ 
  m = J^c(Q) = \int |\na Q|^2 + c|Q|^2 dx - 2F(Q),}
\EQ{ \label{static NLKG} 
 \pt -\De Q + c Q = f'(Q), \pq \|\na Q\|_{L^2}^2 + c\|Q\|_{L^2}^2<\I,}
and $J^c(Q)$ is the minimum among the solutions of \eqref{static NLKG}, i.e. $Q$ is the ground state for \eqref{static NLKG}. $c=1$ in the subcritical case \eqref{f sub} and the exponential case \eqref{f exp}, and $c=0$ in the critical case \eqref{f crit}.} 

Hence transferring the global Strichartz bounds to the damped NLKG, we obtain the following stabilization in the set $K^+$. 
\begin{thm}\label{Mainresult3}
Let $N\ge 1$, $\Om=\R^N$ and $f(u)$ given by either (1), (2) or (3). 
Assume that $a:\R^N\to\R$ satisfies \eqref{asm a prec} with some $M,R,a_0>0$. 
Let $0<E_0<m$. Then there exists $\ga>0$ determined by $E_0$, $f$, $M$, $R$, $a_0$ and $N$ such that for any initial data satisfying $E(u;0)\le E_0$ and $K(u(0))\geq0$, we have a unique global solution $u$ of \eqref{dNLKG} with exponential decay  
\EQ{ 
 \frac{2}{N+2} E_F(u;t) \le E(u;t) \le 2 e^{-\ga t}E(u;0),}
for all $t>0$.   Recall that $E_F$ is the free energy given in \eqref{free energy}.
\end{thm}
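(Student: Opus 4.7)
My plan is to reduce Theorem \ref{Mainresult3} to the three-ingredient Morawetz-based scheme of Theorem \ref{Mainresult1}, after handling the two focusing-specific issues: the total energy is no longer a priori coercive over $E_F$, and the density $g(u)$ has no definite sign. Throughout I would work inside $\mathcal{K}^+$ and exploit the global theory of \cite{IMN} for the undamped NLKG \eqref{NLKG}. The first step is to show invariance: since $E(u;t)$ is non-increasing by \eqref{decay}, it stays strictly below $m$, and the variational separation from \cite{IMN} (any nontrivial $\varphi$ with $K(\varphi)=0$ satisfies $J(\varphi)\ge m$) rules out $K(u(t))$ crossing zero by continuity, so $(u(t),u_t(t))\in\mathcal{K}^+$ for all $t\ge 0$. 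Inside $\mathcal{K}^+$, the standard \cite{IMN} coercivity estimate gives $\frac{2}{N+2}E_F(u;t)\le E(u;t)$, which delivers the left inequality of the theorem and a uniform $H^1\times L^2$ bound on $(u,u_t)$.

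Next I would transfer a global Strichartz bound from the undamped to the damped equation. By \cite{IMN}, the undamped NLKG with data in $\mathcal{K}^+$ admits a global solution $v$ satisfying a uniform bound $\|v\|_X\le C(E_0)$ in the nonlinear Strichartz space $X=L^q_tL^r_x$ associated with $f$. I would partition $[0,\infty)$ into finitely many intervals $I_k=[t_k,t_{k+1}]$ on which $\|v_k\|_{X(I_k)}$ is small, where $v_k$ solves \eqref{NLKG} with Cauchy data $(u(t_k),u_t(t_k))$, and close a Strichartz fixed-point estimate for $u-v_k$ on each $I_k$ by placing the perturbation $a(x)u_t$ into a dual Strichartz norm. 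The crucial bound
\EQ{
 \|a(x)u_t\|_{L^2(I_k\times\Om)}\le\|a\|_\I^{1/2}\bigl(E(u;t_k)-E(u;t_{k+1})\bigr)^{1/2}
}
follows from \eqref{decay} and is square-summable in $k$, so iteration yields $\|u\|_X\le C'(E_0)$ globally, and simultaneously gives the global well-posedness and uniqueness claimed in the theorem.

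With the global Strichartz bound in hand, I would re-run the proof of Theorem \ref{Mainresult1}. The only place where the defocusing sign condition \eqref{condition f} entered was to drop the nonnegative density $g(u)$ from the left-hand side of the Morawetz-type space-time estimate. In the focusing setting $g(u)$ can change sign, but H\"older's inequality combined with the uniform $H^1$ and Strichartz bounds above gives $\iint_{|x|<t/2}|g(u)|/|t|\,dxdt\le C(E_0)$, and this nonlinear contribution is then absorbed into the right-hand side of the Morawetz-type estimate. After this single modification, the weighted Sobolev inequality and the energy equipartition step of Theorem \ref{Mainresult1} apply verbatim, yielding uniform exponential decay with rate $\gamma=\gamma(E_0,f,M,R,a_0,N)$.

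The principal obstacle is the Strichartz transfer of the second step. A naive short-time perturbation argument would accumulate an error whose bound grows with the number of intervals and hence with $t$, destroying global control. The resolution is precisely the dissipation of energy: the dual-Strichartz norm of the perturbation $a(x)u_t$ on $I_k$ is controlled by $(E(u;t_k)-E(u;t_{k+1}))^{1/2}$, a square-summable quantity, which keeps the iterated Strichartz bound uniform in time without recourse to linear approximation or microlocal analysis, in keeping with the nonlinear philosophy advocated elsewhere in the paper.
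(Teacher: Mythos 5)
Your overall architecture (invariance of $\mathcal K^+$, transfer of global Strichartz bounds from the undamped flow, absorption of $\iint|g(u)|/\la$ into the Morawetz right-hand side, then the equipartition argument) is the paper's, but your second step has a genuine gap. The damping term $a(x)u_t$ cannot be placed in $L^2_{t,x}(I_k\times\Om)$ as a Duhamel source: $L^2_tL^2_x$ is not a dual admissible Strichartz norm for Klein--Gordon, so you must pass to $L^1_t(I_k;L^2_x)$, and by Cauchy--Schwarz in time this costs a factor $|I_k|^{1/2}$, giving $\|a u_t\|_{L^1_tL^2_x(I_k)}\le M^{1/2}|I_k|^{1/2}\bigl(E(u;t_k)-E(u;t_{k+1})\bigr)^{1/2}$. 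The intervals $I_k$ are chosen by smallness of the reference solution's Strichartz norm, so their lengths are uncontrolled (the last one is infinite for a scattering solution), and square-summability of the energy decrements therefore does \emph{not} make the perturbation small enough to close the iteration unconditionally. The paper resolves this with a dichotomy on each window $[0,T]$ of the fixed length used in the Morawetz argument: either $A(u;T)\le (MT)^{-1}\e$, in which case $\|a\dot u\|_{L^1_t(0,T;L^2_x)}\le M^{1/2}T^{1/2}A(u;T)^{1/2}$ is small and the global perturbation lemma of \cite{IMN} applies on $[0,T]$; or $A(u;T)\ge (MT)^{-1}\e$, which already yields the energy decrement $A(u;T)\ge \de E(u;0)$ with $\de=(MT)^{-1}\e/E_0$ and hence the exponential decay directly, with no Strichartz bound needed. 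You should restructure your transfer step around this dichotomy rather than claiming an unconditional global bound.

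Two further points. First, the equipartition step does \emph{not} apply verbatim: in the defocusing case the lower bound \eqref{low bd K defoc}, namely $K(u)\ge E(u;t)-\|\dot u\|_{L^2_x}^2$, relies on $g\ge 0$, which fails here. Knowing only $K(u(t))\ge 0$ from the invariance of $\mathcal K^+$ is not enough; you need the quantitative variational bound $K(\fy)\ge\nu\|\fy\|_{H^1}^2$ for $J(\fy)\le E_0<m$ (as in \cite[Lemma 2.12]{IMN}), which replaces \eqref{low bd K defoc} by $K(u)\ge\nu[E(u;t)-\|\dot u(t)\|_{L^2_x}^2]$ and introduces the factor $\nu^{-1}$ into the final rate. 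Second, your claim that H\"older plus the Strichartz bounds controls $\iint|g(u)|/\la$ is adequate for the pure-power cases (1)--(2) via Lorentz-space H\"older and Sobolev embedding, but the exponential case (3) requires a separate argument: the paper needs a Lorentz-space estimate on $g(\fy)$ (Lemma \ref{crit-nlst}) proved via the sharp logarithmic inequality of \cite{DlogSob} and the Trudinger--Moser subcriticality $\|\na u(t)\|_{L^2}^2\le E_0<m\le 1$, which your sketch does not address.
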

In the above Theorem, the dependence of $\ga$ upon $E_0$ is inevitable, since $Q$ is a solution of the damped NLKG without any decay.\\ 

In the sequel, $A\lec B$ or $B\gec A$ means $A\le CB$ for some positive constant $C$.


\section{Proof in the defocusing case: Theorem \ref{Mainresult1}}
In this section, we prove the uniform exponential decay in the defocusing case, i.e.~Theorem \ref{Mainresult1}. 
The proof relies solely on the multiplier argument, without any estimate for the linearized equation. 
However in order to justify some energy-type computations, we first approximate the nonlinearity so that we have the global wellposedness, derive the uniform decay for the solutions of the approximate equations with the same initial data, and then take the weak limit to the original equation. 
Note that it is in general hard to get any information for a given weak solution in the absence of the wellposedness. 
\subsection{Truncation of nonlinearity}
First we reduce to the $H^1$ subcritical case by approximation of the nonlinearity. 
Note that this procedure is not entirely trivial. 
On the one hand, we should not simply cutoff the nonlinearity, since $g$ vanishes when $f'$ is linear. 
In other words, approximate nonlinearity must be at least superlinear. 
On the other hand, we need global wellposedness for the approximate equations to justify our energy-type computations. 
The global wellposedness is available in the energy subcritical and critical cases, for which growth condition should be imposed on $f''$, rather than on $f$ or $f'$.  
Consequently, contrary to the classical methods as in \cite{Segal,Lions,Strauss_WeakSol}, we need two step approximations of the nonlinearity, one from below and another from above. 

Let $V(u)=f(u)/|u|^2$ be the nonlinear potential energy. 
Then we have $f(u)=V(u)|u|^2$ and $g(u)=V'(u)|u|^2u$, so our conditions on $f$ are rewritten as 
\EQ{ \label{weak}
 V(0)=0, \pq uV'(u)\ge 0, \pq V(u) \le C_0(1 + uV'(u)),}
and the weaker condition \eqref{condition f2} is given by
\EQ{
 V(u) \le C_0(1 + uV'(u) + |u|^{q-2}).}
We construct a sequence of approximate nonlinear potentials $V_k$ as follows. 
First we fix $\th\in(0,1)$, depending only on the dimension $N$, such that $2+\th<2^\star$. 
If there exists some $k\in\N$ such that for all $|z|\ge k$
\EQ{
 zV'(z) \le kV'(k)|z/k|^{\th},} 
then $f$ is subcritical, i.e. $|f'(z)|\lec |zV'(z)z|+|V(z)z|\lec |z|^{1+\th}$. 
In this case we put $f_k=f$.  
Otherwise, there exists a sequence of $k\in\N$ along which $V'(k)k^{1-\th}$ is increasing. 
For those $k$ we define the first approximation $V_k:\R\to\R$ by 
\EQ{
 V_k(z) = V(z)\ (|z|\le k), \pq zV_k'(z) = \min(zV'(z),kV'(k)|z/k|^{\th}) \ (|z|\ge k),}
and $f_k:\R\to\R$ by
\EQ{
 f_k(z)=V_k(z)|z|^2.}
Then clearly we have $V_k\ge 0$, $V_k'\ge 0$, and $f_k'$ is locally Lipschitz. 
Our choice of $k$ implies that $zV_k'(z)$, $V_k(z)$ and $f_k(z)$ are all increasing in $k$ at each $z$. 
The $V'$ dominance is inherited by $V_k$ as follows, where the constants may depend also on $\th$: for $z>k$ we have 
\EQ{
 V_k(z) 
 \pt\le V(k) + \min(\int_k^z V'(y) dy,\int_k^z V'(k)|y/k|^{\th-1}dy)
 \pr\lec \min(V(z), V(k)+ V'(k)|z|^\th k^{1-\th}) \lec 1+ zV_k'(z),}
and similarly for $z<-k$, while it is trivial for $|z|\le k$. \eqref{condition f2} is inherited in the same way. 

Although the growth for $f_k$ and $f_k'$ is $H^1$ subcritical, $f_k''$ might not be $H^1$ subcritical when $f''$ is oscillating for large $u$. 
Hence to have global wellposedness for approximate equations, we need further approximation $V_{kl}:\R\to\R$ defined for $l>k$ by 
\EQ{
 V_{kl}(z) = V_k(z) \ (|z|\le l) \pq zV_{kl}'(z) = lV_k'(l)|z/l|^{\th} \ (|z|\ge l),}
and $f_{kl}:\R\to\R$ by $f_{kl}(z)=V_{kl}(z)|z|^2$. Then $f_{kl}'$ is locally Lipschitz and 
\EQ{
 |f_{kl}'(z_1)-f_{kl}'(z_2)| \lec (|z_1|+|z_2|)^\th |z_1-z_2|.}
Now that the approximate nonlinearity is $H^1$ subcritical (in fact we can make it as close to be linear as we want), we have the global wellposedness for 
\EQ{ \label{Truncated eq2}
 \p^2_tu + a(x) \p_t u - \De u + u  + f_{kl}'(u)=0.}

Now let $u_{kl}$ be the global solution for each $k<l$ with the same initial data, and assume that we have 
\EQ{ \label{asm unif dec}
 E_{kl}(u_{kl};t) \le 2 e^{-\ga t} E_{kl}(u_{kl};0),}
for some $\ga>0$, all $t>0$, and all $k<l$, where $E_{kl}$ denotes the energy for the approximate equation 
\EQ{
 E_{kl}(v;t) = \int_{\Om} |\dot v|^2 + |\na v|^2 + |v|^2 + f_{kl}(v) dx.}
Then $u_{kl}$ is uniformly bounded in $H^1\times L^2$, and so the classical weak compactness argument (cf. \cite{Strauss_WeakSol,Struwe}) gives us a subsequence, which we still denote by $u_{kl}$, converging as $l\to\I$ to some function $u_k(t,x)$ in the sense 
\EQ{  
 \pt u_{kl}(t) \to u_k(t) \IN{C([0,\I);\weak{H^1_0(\Om)})}, 
 \pr \dot u_{kl}\to\dot u_k\IN{\weak{L^2_{loc}((0,\I);L^2(\Om))}},}
where $\weak{X}$ denotes the space endowed with the weak topology, and also $u_{kl}(t,x)\to u_k(t,x)$ pointwise for almost every $x\in\Om$ at all $t>0$. 
Note that in this limit $l\to\I$, the nonlinear energy $f_{kl}$ and its limit $f_k$ are $H^1$ subcritical, and so are dominated by the $H^1$ norm through the Sobolev embedding.  
Hence the limit $u_k$ is a global weak solution to
\EQ{ 
 \p^2_tu + a(x) \p_t u - \De u + u + f_k'(u)=0,}
and moreover, by Fatou's lemma, the uniform decay for $u_{kl}$, and the Lebesgue dominated convergence theorem respectively, we have
\EQ{
 E_k(u_k;t) \le \liminf_{l\to\I} E_{kl}(u_{kl};t) \le \liminf_{l\to\I} 2e^{-\ga t}E_{kl}(u;0) = 2e^{-\ga t}E_k(u_k;0).} 
Next for a subsequence of $k\to\I$ we have a similar convergence $u_k\to u$ as above, and by Fatou, the uniform decay of $u_k$ and the monotone convergence theorem respectively we get 
\EQ{
 E(u;t) \le \liminf_{k\to\I} E_k(u_k;t) \le \liminf_{k\to\I} 2e^{-\ga t}E_k(u;0) = 2e^{-\ga t}E(u;0),}
and $u$ is a global weak solution of the original damped NLKG. 
Note that we cannot use the dominated convergence theorem in this step, since the nonlinear energy in the limit is no longer controlled by the Sobolev embedding. 

It remains to prove weak convergence of the nonlinear term in the equation. 
Multiplying the equation of $u_{kl}$ with $u_{kl}$ and integrating it in space-time, we get 
\EQ{
 [\LR{\dot u_{kl}|u_{kl}}_{L^2_x}]_0^T + \int_0^T \|\na u_{kl}\|_{L^2_x}^2 - \|\dot u_{kl}\|_{L^2_x}^2 dt 
 + \int_0^T \LR{f_{kl}(u_{kl})|u_{kl}}_{L^2_x} dt =0,}
which implies through the energy identity that 
\EQ{
 \int_0^T \LR{f_{kl}(u_{kl})|u_{kl}}_{L^2_x} dt \lec E(u;0).}
Then by the dominated convergence theorem for the limit $l\to\I$, and by the monotone convergence theorem for the limit $k\to\I$, we get 
\EQ{ \label{uf bound}
 \int_0^T \LR{f_k(u_k)|u_k}_{L^2_x} dt + \int_0^T \LR{f(u)|u}_{L^2_x} dt \lec E(u;0).}
For any $L\gg 1$, let $f^L(u)=\min(f(u),f(L))$ and $f_k^L(u)=\min(f_k(u),f_k(L))$. 
Then for any bounded domain $K\subset\Om$ we have 
\EQ{
 \|f_k(u_k)-f(u)\|_{L^1([0,T]\times K)} 
 \pt\le \|f_k^L(u_k)-f^L(u)\|_{L^1_{t,x}([0<t<T]\times K)} 
 \pr+ \|u_kf_k(u_k)/L\|_{L^1_{t,x}(0<t<T,\ |u_k|>L)} 
 \pr+ \|uf(u)/L\|_{L^1_{t,x}(0<t<T,\ |u|>L)}.}
On the right, the first term tends to $0$ as $k\to\I$ by the dominated convergence theorem, while the other two terms are bounded by $E(u;0)/L$ thanks to \eqref{uf bound}. 
Hence $f_k(u_k)\to f(u)$ in $L^1_{loc}((0,\I)\times\Om)$, and so $u$ is a global weak solution of the original equation  satisfying the uniform exponential decay.

\subsection{Uniform decay for subcritical nonlinearity} \label{defoc sub}
Now it suffices to prove the uniform exponential decay in the $H^1$ subcritical (and defocusing) case, with the decay rate independent of the nonlinear term and the initial data. 
Since we have global wellposedness in this case, energy-type computations used below are easily justified by standard approximation argument. 
In particular we have the energy identity \eqref{decay} for any finite energy solution $u$. 
Denote the energy decrement on the interval $[0,T]$ by 
\EQ{
 A(u;T) := \int_0^T\int_{\Om} a(x)|\dot u|^2 dxdt.} 
We are going to show that for some quantitative $T>0$ and $\de>0$, and for all solutions $u$ with finite energy we have 
\EQ{ \label{energy decrease}
 A(u;T) \ge \de E(u;T).}
Then the energy identity implies that 
\EQ{
 E(u;t+T) \le E(u;t) - \de E(u;t+T),}
for any $t>0$, and hence by repeated use of it we get the exponential decay \eqref{exp dec} with the rate 
\EQ{ \label{exp rate}
 \ga = T^{-1}\log(1+\de).}
 
Hence we are mostly concerned with the case where $A(u;T)$ is small compared with $E(u;0)$. 
The main ingredients of our argument are the energy equipartition property, and the following Morawetz-type estimate, which is a trivial modification of that \cite{Noncoer,2Dsubcrit} for the undamped equation. 
Let 
\EQ{
 \pt \la = |(t,x)| = \sqrt{t^2+|x|^2}, \pq q=\frac{(N-1)}{2\la}+\frac{t^2-|x|^2}{\la^3}, 
 \pr m(u)=\frac{-tu_t+x\cdot\na u}{\la} +uq.}
Multiplying equation \eqref{dNLKG} with $m(u)$ and integrating it in the light cone, we get the Morawetz-type estimate with a space-time weight:
\EQ{ \label{Mor.est} 
 \int_S^T \int_{\substack{x\in\Om \\ |x|<t}} \frac{|x\dot u+t\na u|^2}{\la^3} + g(u)q + a\dot um(u) dxdt \lec E(u;0)}
for any $1<S<T$ and any solution $u$. See \cite{Noncoer,2Dsubcrit} for the details of the computation in the whole space $\R^N$. $  $
Without loss of generality, we may now choose 
\EQ{
 1\le R < S = 3R < T,}
where $R$ is the radius for effective damping given in \eqref{asm a prec}.\\
The damping term in \eqref{Mor.est} is dominated by the energy decrement 
\EQ{
 \int_S^T\int_{\Om} |a\dot um(u)| dxdt 
 \pt\le \|a\|_{L^\I_x}^{1/2}\sqrt{A(u;T)}\|m(u)\|_{L^2_{t,x}(S<t<T)}
 \pr\lec M^{1/2}\sqrt{A(u;T)}T^{1/2}\sqrt{E(u;0)}
 \pr\lec E(u;0)+MTA(u;T).}
The kinetic part in \eqref{Mor.est} is discarded by 
\EQ{
 \int_S^T\int_{\Om} \frac{|x\dot u|^2}{\la^3} dxdt 
 \pt\le \int_S^T \int_{\substack{x\in \Om \\ |x|<R}} \frac{R^2}{t^3}|\dot u|^2 dxdt
 + \int_S^T \int_{\substack{x\in\Om \\ |x|>R}} \frac{a(x)|\dot u|^2}{a_0(R+t)} dxdt
 \pr\lec E(u;0) + \frac{A(u;T)}{a_0R}.}
Using that $g\ge 0$ as well, we thus get 
\EQ{ \label{Mora bd}
 \int_S^T \int_{\substack{x\in\Om \\ |x|<t}} \frac{|\na u|^2}{t} + \frac{(t^2-|x|^2)g(u)}{t^3} dxdt 
 \lec E(u;0)[1+\mu(u;T)].}
where we put 
\EQ{ \label{def mu}
 \mu(u;T) := [MT+(a_0R)^{-1}]A(u;T)/E(u;0).}

Next, we fix a smooth cut-off function $\chi(x)\in C_0^\I(\R^N)$ such that $\chi(x)=1$ for $|x|<R$ and $\chi(x)=0$ for $|x|>2R$, and multiply \eqref{dNLKG} with $\chi u_n$. 
Integrating it in $\Om$, we get 
\EQ{ \label{L^2_tt}
 \p_t\LR{\dot u|\chi u}_{L^2_x}
 \pt= \int_{\Om} \chi\left[|\dot u|^2 - |\na u|^2 - |u|^2 - u f'(u)\right] dx
 \prq - \int_{\Om} [a\dot u\chi u + u\na u\cdot\na\chi] dx.}
Then by partial integration in $t$, and using the support property of $\chi$, 
\EQ{ \label{u^2_tt/t}
 \pt\int_S^T\int_{\Om} \chi\frac{|\dot u|^2}{t} dxdt
 \pr\le \int_S^T \frac{\p_t\LR{\dot u|\chi u}_{L^2_x}}{t} dt + \int_S^T\int_{\substack{x\in\Om \\ |x|<2R}} \frac{|\na u|^2+|u|^2+|uf'(u)|+|a\dot u|^2}{t} dx dt
 \pr\lec E(u;0) + MA(u;T) + \int_S^T \int_{\substack{x\in\Om \\ |x|<2R}}\frac{|\na u|^2+|u|^2+|uf'(u)|}{t}dxdt.}
For the last term we use the assumption on $f$ to replace it by 
\EQ{ \label{bd uf'}
 |uf'(u)| \le g(u) + 2f(u) \le (1+C_0)g(u)+C_0|u|^2.}
Then the $g$ term as well as the gradient term is treated by the Morawetz-type estimate. 
For the remaining $|u|^2$ part, we apply a weighted Sobolev inequality to the quadratic part of the Morawetz-type estimate, which implies
\EQ{\label{claim}
 \int_S^T \int_{\substack{x\in\Om \\ |x|<t}} \frac{|u|^p}{t} dxdt \lec E(u;0)^{p/2-1}\left[E(u;0)+\int_S^T\int_{\substack{x\in\Om \\ |x|<t}} \frac{|x\dot u+t\na u|^2}{\la^3}dxdt\right],}
for all $2+4/N\le p\le 2N/(N-2)$. 
The proof was given in \cite{Nak-rem} for the undamped NLKG, which works verbatim for the damped NLKG. 
Then applying H\"older and the support property of $\chi$, we get
\EQ{\label{u_^2/t bd}
 \int_{S}^T\int_{\Om} \chi\frac{|u|^2}{t} dxdt
 \pt\leq \left[\int_S^T\int_{\substack{x\in\Om \\ |x|<t}}\frac{|u|^p}{t}dxdt\right]^{2/p}\left[\int_{\substack{x\in\Om \\ |x|<2R}}\int_{S}^T \frac{dt}{t}dx\right]^{1-2/p}
 \pr\lec E(u;0)[1+\mu(u;T)]^{2/p} (R^N \log T)^{1-2/p}.}
On the other hand for $|x|>R$, $\dot u$ is estimated by
\EQ{ \label{u_t far}
 \int_S^T \int_{\substack{x\in\Om \\ |x|>R}} \frac{|\dot u|^2}{t} dxdt \le \int_S^T \int_{\Om} \frac{a(x)|\dot u|^2}{a_0 t} dxdt
 \le \frac{A(u;T)}{a_0 S}.} 
Putting together \eqref{u^2_tt/t}, \eqref{Mora bd}, \eqref{u_^2/t bd} and \eqref{u_t far}, we obtain 
\EQ{ \label{upper bd}
 \int_{S}^T \frac{\|\dot u\|_{L^2_x}^2}{t} dt 
 \pt\lec (1+C_0)E(u;0)[1+\mu(u;T)] 
 \pr+ E(u;0)[1+\mu(u;T)]^{2/p} (R^N \log T)^{1-2/p}.}
This is an upper bound. To get a lower bound, we go back to \eqref{L^2_tt} and replace $\chi$ by $1$. 
Then we obtain 
\EQ{\label{Id K}
 \p_t[\LR{\dot u|u}_{L^2_x}+\LR{au|u}_{L^2_x}/2] = \|\dot u\|_{L^2_x}^2 - K(u),}
where as before we denote
\EQ{ \label{low bd K defoc}
 K(u) \pn= \int_{\Om} |\na u|^2 + |u|^2 + uf'(u) dx
 \pt= E(u;t)-\|\dot u(t)\|_{L^2_x}^2 + \int_{\Om}  g(u)dx
 \pr\ge E(u;t)-\|\dot u(t)\|_{L^2_x}^2.}
Hence we get 
\EQ{
  2\|\dot u(t)\|_{L^2_x}^2 \ge E(u;t) + \p_t \LR{\dot u|u}_{L^2_x}+\LR{a\dot u|u}_{L^2_x},}
and integrating it by $dt/t$,  
\EQ{ \label{u^2 whole x}
 \int_{S}^T \frac{dt}{t}2\|\dot u(t)\|_{L^2_x}^2
 \pt\ge \int_{S}^T\frac{dt}{t}\left\{E(u;t)+ \p_t\LR{\dot u|u}_{L^2_x} + \LR{a\dot u|u}_{L^2_x}\right\}
 \pr\ge E(u;T)\log\frac{T}{S} + [\LR{\dot u|u/t}_{L^2_x}]_{S}^T + \int_{S}^T \LR{\dot u|u}_{L^2_x}\frac{dt}{t^2}
 \pr\pq\pq\pq -MA(u;T)-\int_S^T \|u\|_{L^2_x}^2\frac{dt}{t^2}
 \pr\ge E(u;T)\log\frac{T}{3R} - E(u;0)/R - MA(u;T).}
Combining it with \eqref{upper bd}, we obtain  
\EQ{ \label{combined est}
 E(u;T)\log\frac{T}{3R} \pt\lec (1+C_0)E(u;0)[1+\mu(u;T)] 
 \prq+ E(u;0)[1+\mu(u;T)]^{2/p} (R^N \log T)^{1-2/p}.}
Hence if  
\EQ{
 [1+MT+(a_0R)^{-1}]A(u;T) \le E(u;0)/2,}
then we have $\mu(u;T)\le 1/2$, $E(u;T)\ge E(u;0)/2$ and 
\EQ{
 \log T \lec 1 + C_0 + \log R  + (R^N\log T)^{1-2/p},}
which implies  
\EQ{
 \log T \lec 1 + C_0 + \log R + R^{N(p/2-1)} \lec 1 + C_0 + R^{N(p/2-1)}.}
In other words, there exists an absolute constant $C_*>0$ such that we have \eqref{energy decrease} and hence \eqref{exp dec} with 
\EQ{
 \log T = C_*(1 + C_0 + R^2), \pq \de=[1+MT+(a_0R)^{-1}]^{-1}/2,}
where we chose $p=2+4/N$. 

Finally we consider the weaker assumption \eqref{condition f2}. 
Then \eqref{bd uf'} is changed to 
\EQ{
 |uf'(u)| \le (1+C_0)g(u) + C_0(|u|^2 + |u|^q).}
where without loss of generality, we may assume that $q\ge 2+4/N$.  
Then to bound \eqref{u^2_tt/t} we need also \eqref{claim} with $p=q$, and consequently \eqref{upper bd} is modified to 
\EQ{ \label{upper bd2}
 \int_{S}^T \frac{\|\dot u\|_{L^2_x}^2}{t} dt 
 \pt\lec (1+C_0)E(u;0)[1+\mu(u;T)] + C_0E(u;0)^{q/2}[1+\mu(u;T)]
 \pr+ E(u;0)[1+\mu(u;T)]^{2/p} (R^N \log T)^{1-2/p},}
and so the decay rate in this case is given by 
\EQ{
 \log T = C_*(1 + C_0 + R^2 + C_0 E(u;0)^{q/2-1}), \pq \de=[1+MT+(a_0R)^{-1}]^{-1}/2.}
\qedsymbol

\section{Proof in the focusing case: Theorem \ref{Mainresult3}}
In this section, we prove the exponential decay in the focusing case, namely Theorem \ref{Mainresult3}. 
In order to proceed in the same way as in Section \ref{defoc sub}, we need the following three ingredients:
\begin{enumerate}
\item Upper bound on $E_F(u;t)$ in terms of $E(u;t)$. 
\item Lower bound on $K(u(t))$ in terms of $E(u;t)$. 
\item Upper bound on the space-time integral of $|g(u)|/\la$ in terms of the energy and Strichartz norms of the undamped equation. 
\end{enumerate}
\subsection{Variational properties}
The first two are well-known and coming from the variational characterization of $Q$. 
In fact, since $|uf'(u)|\ge(2+4/N)|f(u)|$, we have  
\EQ{ \label{free eng bd}
 (N+2)E(u;t) \ge NK(u(t)) + 2E_F(u;t) + \int N|\dot u|^2 dx > 2E_F(u;t),}
and for any $E_0\in(0,J(Q))$ there exists $\nu>0$ determined by $N$, $f$ and $E_0$ such that for any $\fy\in H^1(\R^N)$ satisfying $J(\fy)\le E_0$ and $K(\fy)>0$ we have 
\EQ{ \label{low bd K foc}
 K(\fy) \ge \nu \|\fy\|_{H^1}^2.}
For a proof of this lower bound, see \cite[Lemma 2.12]{IMN}. 
In the exponential case $f=f_{exp}$ ($N=2$), we have another important property \cite[Lemma 2.11]{IMN}
\EQ{ \label{exp sub}
 \|\na u(t)\|_{L^2_x}^2 + \|\dot u(t)\|_{L^2_x}^2 \le E(u;t) \le E_0 < m \le 1,}
which ensures that the solution stays in the subcritical regime for the Trudinger-Moser inequality 
\EQ{
 \|\na\fy\|_{L^2(\R^2)}<1 \implies \int_{\R^2}(e^{4\pi|\fy(x)|^2}-1) dx \lec \frac{\|\fy\|_{L^2(\R^2)}^2}{1-\|\na\fy\|_{L^2(\R^2)}^2}.}

\subsection{Global Strichartz bound from the scattering}
The integral bound on $|g(u)|/\la$ requires transfer of global Strichartz norms from the undamped equation. 
Specifically, we use the following Strichartz norms: 
\EQ{
 \|u\|_{St_T} := \|u\|_{L^{2_\star}_t(0,T;B^{1/2}_{2_\star,2}(\R^N))} + \|u\|_{L^{2+4/(N-1)}_t(0,T;B^{1/2}_{2+4/(N-1),2}(\R^N))},}
if $N\ge 3$, 
\EQ{
 \|u\|_{St_T} := \|u\|_{L^{2_\star}_t(0,T;B^{1/2}_{2_\star,2}(\R^2))} + \|u\|_{L^4_t(0,T;B^{1/4}_{\I,2}(\R^2))},}
if $N=2$, and 
\EQ{
 \|u\|_{St_T} := \|u\|_{L^6_t(0,T;B^{1/2}_{6,2}(\R))},}
if $N=1$, where $B^s_{p,q}$ denotes the inhomogeneous Besov space. 

The main result of \cite{IMN} asserts that under the assumptions of Theorem \ref{Mainresult3}, there exists a unique global solution $v(t,x):\R^{1+N}\to\R$ for the undamped NLKG \eqref{NLKG}, which satisfies 
\EQ{
 \sup_{t>0}E_F(v;t)\le A<\I, \pq  \|v\|_{St_\I} \le B<\I,} 
for some $A$ and $B$ determined by $N$, $f$ and $E_0$. 
Moreover, one has the following global perturbation result \cite[Lemma 4.5]{IMN}: {\it There exists $\e>0$, determined by $N$, $f$ and $E_0$, such that for any $T>1$ and any $u(t,x):[0,T]\times\R^N\to\R$ satisfying 
\EQ{
 E_F(u-v;0)^{1/2} + \|\ddot u-\De u + u + f'(u)\|_{L^1_t(0,T;L^2_x)} \le \e,}
we have 
\EQ{
 \|u-v\|_{St_T} \le C<\I,}
where $C$ is also determined by $N$, $f$ and $E_0$.}

We apply the above perturbation to the solution $u$ of the damped NLKG \eqref{dNLKG} with the same initial data $E_F(u-v;0)=0$. Since 
\EQ{
 \|a\dot u\|_{L^1_t(0,T;L^2_x)} \le M^{1/2}T^{1/2}A(u;T)^{1/2},}
we get 
\EQ{ \label{Stz bd}
 \|u\|_{St_T}\le C+B<\I,} 
provided that 
\EQ{
 A(u;T) \le (MT)^{-1}\e.}
Otherwise we get the exponential decay \eqref{exp rate} with $\de=(MT)^{-1}\e/E_0$. 

\subsection{Morawetz estimate from the Strichartz bound}
By using the Strichartz bound \eqref{Stz bd}, we can bound the nonlinear part in the Morawetz estimate 
\EQ{
 \int_1^T \int_{\R^N} \frac{|g(u)|}{\la} dx dt,}
as follows. Denote $\NN:=[(N+1)E_0]^{1/2}+C+B$ so that we have 
\EQ{
 E_F(u;t)^{1/2} + \|u\|_{St_T} \le \NN.} 
First we consider the case $N\ge 3$. For the $H^1$ critical power, we get by the H\"older, Sobolev and interpolation inequalities  
\EQ{
 \int_0^T\int_{\R^N} \frac{|u|^{2^\star}}{|x|} dxdt
  \pt\lec \||x|^{-1}\|_{L^{N,\I}_x} \|u\|^{2^\star}_{L^{2^\star}_t(0,T; L^{2^\star N/(N-1),2}_x)}
  \pr\lec \|u\|^{2^\star}_{L^{2^\star}_t(0,T;B^{1/2^\star}_{2^\star,2})}
  \pn\lec \|u\|_{L^\I_t(0,T;H^1_x)}^{\frac{4}{(N-1)(N-2)}} \|u\|_{St_T}^{\frac{2(N+1)}{N-1}} \lec \NN^{2^\star},}
where $L^{p,q}_x$ denotes the Lorentz space on $\R^N$. 
For the $L^2$ critical power, we have 
\EQ{
\int_0^T\int_{\R^N}\frac{|u|^{2_\star}}{|x|} dxdt
  \pt\lec \||x|^{-1}\|_{L^{N,\I}_x} \|u\|^{2_\star}_{L^{2_\star}_t(0,T; L^{(2N+4)/(N-1),2}_x)} 
  \pr\lec \|u\|^{2_\star}_{L^{2_\star}_t(0,T; B^{N/(2N+4)}_{2_\star,2})} \lec \|u\|_{St_T}^{2_\star}\lec \NN^{2_\star}.}
Next we consider the case $N=2$ with $f_p$, $p\ge 4=2_\star$. 
Then we have 
\EQ{
 \int_0^T \int_{\R^2} \frac{|u|^{p}}{|x|} dxdt
  \pt\lec \||x|^{-1}\|_{L^{2,\I}_x} \|u\|^{p}_{L^{p}_t(0,T; L^{2p,2}_x)}
  \pr\lec \|u\|^{p}_{L^{p}_t(0,T; B^{1/p}_{p,2})} 
  \lec \|u\|_{L^\I_t(0,T; B^0_{\I,2})}^{p-4} \|u\|_{L^4_t(0,T;B^{1/4}_{4,2})}^4 
  \pr\lec \|u\|_{L^\I_t(0,T;H^1)}^{p-4} \|u\|_{St_T}^{4} \lec \NN^p.}
For the case $N=1$ with $f_p$ and $p\ge 6=2_\star$, we have 
\EQ{
 \int_1^T \int_{\R} \frac{|u|^p}{\la} dxdt
  \le \|u\|_{L^\I(0,T;L^\I_x)}^{p-6} \|u\|_{L^6_t(0,T;L^6_x)}^6 \lec \NN^6.}
Finally we consider the exponential case $f=f_{exp}$ ($N=2$), which requires a bit more work. 
We start with H\"older as in the previous cases  
\EQ{ \label{est g exp}
 \int_0^T \int_{\R^2}  \frac{|g(u)|}{|x|}\;dxdt \lec \||x|^{-1}\|_{L^{2,\I}_x}\|g(u)\|_{L^1_t(0,T;L^{2,1}_x)},}
and to the $L^{2,1}_x$ norm we apply the following exponential estimate. 
Its $L^2_x$ version was proved in \cite{IMMN}. 
\begin{lem} \label{crit-nlst}
Let $g=uf'(u)-2f(u)$ and $f(u)=f_{exp}(u)$ as given in \eqref{f exp}. 
Then for any $A\in(0,1)$, there exist $\ka\in(0,1/8)$, and a
continuous increasing function $C_A:[0,\I)\to[0,\I)$, such that
for any $\fy \in H^1(\R^2)$ satisfying 
\EQ{ \label{H6}
  \|\nabla\fy\|_{L^2}\le A,}
we have,  
\EQ{ \label{NLE0}
 \|g(\fy)\|_{L^{2,1}} 
  \le C_A(\|\fy\|_{H^1})\|\fy\|_{C^{1/4-\ka}}^{4}\|\fy\|_{B^{\ka/2}_{2/\ka,2}}^4 +\|\fy\|_{L^{6,2}}^{6}.}
\end{lem}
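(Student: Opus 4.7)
The plan is to Taylor-expand the excess $g(u)=uf'(u)-2f(u)$ around the origin and to estimate each term of the resulting series in $L^{2,1}$, relying on the strict subcriticality $\|\na\fy\|_{L^2}\le A<1$ to sum via a Trudinger--Moser mechanism. A direct expansion gives
\[
 g(u)=\sum_{k\ge 3}\gamma_k|u|^{2k},\qquad \gamma_k=\tfrac{(2k-2)(4\pi)^k}{k!},
\]
so the series begins at $|u|^6$ and all coefficients are positive; in particular we may split $g=\gamma_3|u|^6+g_{\ge 4}(u)$ and estimate the two pieces separately.

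For the leading $k=3$ piece I would use H\"older in Lorentz spaces together with the Sobolev embedding $H^{2/3}(\R^2)\hookrightarrow L^{6,2}(\R^2)$ to dominate $\|\gamma_3|\fy|^6\|_{L^{2,1}}$ by a constant multiple of $\|\fy\|_{L^{6,2}}^{6}$; this produces the last term in \eqref{NLE0}. For the tail $g_{\ge 4}$ the key is the Lorentz--H\"older bound
\[
 \||\fy|^{2k}\|_{L^{2,1}} \lec \|\fy\|_{L^\I}^{2k-8}\,\||\fy|^{8}\|_{L^{2,1}},\qquad k\ge 4,
\]
together with the factorisation $|\fy|^{8}=|\fy|^{4}\cdot|\fy|^{4}$; one factor is absorbed in $\|\fy\|_{L^\I}^4$ through the Morrey-type embedding $C^{1/4-\ka}\hookrightarrow L^\I$, giving the $\|\fy\|_{C^{1/4-\ka}}^{4}$ factor, while the other is controlled in $L^{4/\ka,2}$ through the Sobolev--Besov embedding $B^{\ka/2}_{2/\ka,2}\hookrightarrow L^{4/\ka,2}$, giving the $\|\fy\|_{B^{\ka/2}_{2/\ka,2}}^{4}$ factor (with $\ka\in(0,1/8)$ chosen so that $\ka/2-2/\ka^{-1}$ matches the $L^{2,1}$ scaling after Lorentz--H\"older balancing). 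Summation in $k$ then produces the tail series $\sum_{k\ge 4}\gamma_k\|\fy\|_{L^\I}^{2k-8}$, which is bounded by a continuous increasing function $C_A(\|\fy\|_{H^1})$ via the subcritical Moser--Trudinger inequality made available by the hypothesis \eqref{H6}: namely the assumption $\|\na\fy\|_{L^2}\le A<1$ guarantees that $\int_{\R^2}(e^{4\pi|\fy|^2}-1)\,dx$ is finite, with explicit dependence on $A$ and $\|\fy\|_{L^2}$, which is exactly what governs the summability of the series above.

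The main obstacle is that an honest pointwise $L^\I$ bound on $\fy$ is unavailable for $H^1(\R^2)$ functions, so the crude Lorentz--H\"older step written above cannot be applied literally. Following the $L^2$ strategy of \cite{IMMN}, I would remedy this by inserting a Littlewood--Paley decomposition $\fy=\sum_j \De_j\fy$, applying Bernstein's inequality on each frequency block to convert frequency localisation into a pointwise bound with a factor $2^{j/2}$ absorbed by the Besov norm $B^{\ka/2}_{2/\ka,2}$, and only then summing the exponential series termwise; the gain from Lorentz $L^{2,1}$ over $L^2$ is recovered by using the sharper Lorentz--H\"older inequality $\|fg\|_{L^{p,q}}\le\|f\|_{L^{p_1,q_1}}\|g\|_{L^{p_2,q_2}}$ with equality $1/q=1/q_1+1/q_2$ in the second index at each step, rather than the cruder inequality permissible in $L^2$. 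Apart from this, the scheme is a direct Lorentz upgrade of the argument in \cite{IMMN}.
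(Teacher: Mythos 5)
Your overall splitting (a pure $|u|^6$ piece handled by Lorentz--H\"older and Sobolev embedding, plus an exponential tail carrying the factor $|u|^4$) matches the skeleton of the paper's argument, and the Lorentz--H\"older step producing $\|\fy\|_{L^{4/\ka,2}}^4\lec\|\fy\|_{B^{\ka/2}_{2/\ka,2}}^4$ is essentially the same as in the paper. But there is a genuine gap at the heart of the tail estimate. You reduce the tail to the series $\sum_{k\ge4}\gamma_k\|\fy\|_{L^\I}^{2k-8}$, which is comparable to $e^{4\pi\|\fy\|_{L^\I}^{2}}$, and you claim this is bounded by $C_A(\|\fy\|_{H^1})$ ``via the subcritical Moser--Trudinger inequality.'' That is not what Trudinger--Moser gives: it controls the \emph{integral} $\int_{\R^2}(e^{4\pi|\fy|^2}-1)\,dx$, not the pointwise quantity $e^{4\pi\|\fy\|_{L^\I}^2}$. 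The latter is simply not bounded by any function of $\|\fy\|_{H^1}$ (Moser's extremal sequences have bounded $H^1$ norm and unbounded $L^\I$ norm), and your proposed Littlewood--Paley/Bernstein remedy does not change this: termwise frequency localisation cannot convert the exponential of the sup-norm into a fixed power of a H\"older norm.

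The missing ingredient is precisely Lemma \ref{Hmu} of the paper, the sharp logarithmic ($L^\I$ versus $H^1\cap C^\alpha$) inequality: $\|\fy\|_{L^\I}^2\le\la\|\fy\|_{H_\mu}^2\log\bigl(C_\la+8^\al\mu^{-\al}\|\fy\|_{C^\al}/\|\fy\|_{H_\mu}\bigr)$. Exponentiating this turns $\|e^{4\pi|\fy|^2}-1\|_{L^\I}^{1/2+\ka}$ into a \emph{power} of $\|\fy\|_{C^{1/4-\ka}}$ with exponent $2\pi(1+2\ka)\la\|\fy\|_{H_\mu}^2$, and the hypothesis $\|\na\fy\|_{L^2}\le A<1$ is used exactly here: after fixing $\mu$ so that $\|\fy\|_{H_\mu}^2\le A_1=(1+A^2)/2<1$, one can choose $\la$ and $\ka$ so that this exponent equals $4$, which is where the factor $\|\fy\|_{C^{1/4-\ka}}^{4}$ in \eqref{NLE0} actually comes from. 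In your write-up that factor instead arises from absorbing one copy of $|\fy|^4$ via the Morrey embedding $C^{1/4-\ka}\hookrightarrow L^\I$, which mislocates its role; the H\"older norm is needed to tame the exponential tail, not to bound a single power of $\fy$. The remaining factor $\|e^{4\pi|\fy|^2}-1\|_{L^1}^{1/2-\ka}$ is the piece that Trudinger--Moser (with \eqref{exp sub}) controls. Without the logarithmic inequality your tail sum diverges, or at best yields an exponential rather than quartic dependence on the H\"older norm, so the proof as proposed does not close.
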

Note that \eqref{H6} is satisfied by $u(t)$ thanks to the variational property \eqref{exp sub}, and the coefficient $C_A(\|u(t)\|_{H^1})$ is uniformly bounded. 
Thus we can continue the estimate \eqref{est g exp} using the above lemma, 
\EQ{
 \int_0^T \int \frac{|g(u)|}{|x|}dxdt
 \pt\lec \|u\|_{L^{2/\ka}_t(0,T;B^{\ka/2}_{2/\ka,2})}^4 \|u\|_{L^{4/(1-2\ka)}_t(0,T;C^{1/4-\ka})}^4
   +\|u\|_{L^{6}_t(0,T;B^{1/6}_{6,2})}^6
 \pr\lec \|u\|_{L^\I_t(0,T;H^1)}^{4}\|u\|_{L^4_t(0,T;B^{1/4}_{4,2})}^{8\ka}\|u\|_{L^4_t(0,T;B^{1/4}_{\I,2})}^{4-8\ka}
 \pr\pq+\|u\|_{L^\I_t(0,T;H^1)}^{2}\|u\|_{L^4_t(0,T;B^{1/4}_{4,2})}^{4}
 \pr\lec \NN^8+\NN^6.}

It remains to prove the lemma. It is based on a sharp logarithmic inequality proved in \cite[Theorem 1.3]{DlogSob}.
\begin{lem} \label{Hmu}
Let $0<\alpha<1$. For any $\lambda>\frac{1}{2\pi\alpha}$ and any $0<\mu\leq1$, 
there exists a constant $C_{\lambda}>0$ such that, for any function $u\in H^1(\R^2)\cap{\mathcal C}^\alpha(\R^2)$
\begin{equation}
\label{H-mu} \|u\|^2_{L^\infty}\leq
\lambda\|u\|_{H_\mu}^2\log\left(C_{\lambda} +
\frac{8^\alpha\mu^{-\alpha}\|u\|_{{\mathcal
C}^{\alpha}}}{\|u\|_{H_\mu}}\,\,\,\right),
\end{equation}
where $\|u\|_{H_\mu}^2:=\|\nabla u\|_{L^2}^2+\mu^2\|u\|_{L^2}^2$.
\end{lem}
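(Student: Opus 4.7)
The plan is to derive \eqref{H-mu} by splitting the Fourier support of $u$ at a scale $R>0$, bounding the low-frequency piece by Fourier inversion and a weighted Cauchy--Schwarz, bounding the high-frequency piece by the Hölder (Besov) characterization, and then optimizing $R$ in terms of $\|u\|_{\mathcal C^\al}/\|u\|_{H_\mu}$. The sharp constant $1/(2\pi\al)$ makes the choice of decomposition delicate: I would use a single sharp cutoff $\chi_R$ on the low frequencies, rather than a Littlewood--Paley decomposition there, because the dyadic approach would lose constants per block and destroy the sharp prefactor.

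Concretely, with $\chi_R$ the indicator of $\{|\x|<R\}$, Fourier inversion combined with Cauchy--Schwarz against the weight $(\mu^2+|\x|^2)^{-1/2}$ should yield
\[
 \|\F^{-1}(\chi_R\widehat u)\|_{L^\infty}^2
 \le \frac{1}{(2\pi)^4}\left(\int_{|\x|<R}\frac{d\x}{\mu^2+|\x|^2}\right)\big\|(\mu^2+|\x|^2)^{1/2}\widehat u\big\|_{L^2_\x}^2
 = \frac{\log(1+R^2/\mu^2)}{4\pi}\,\|u\|_{H_\mu}^2,
\]
using the exact two-dimensional identity $\int_{|\x|<R}d\x/(\mu^2+|\x|^2)=\pi\log(1+R^2/\mu^2)$ and Plancherel. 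For the high-frequency tail I would sum the Besov bound $\|\De_j u\|_{L^\infty}\lec 2^{-j\al}\|u\|_{\mathcal C^\al}$ over $j\ge\log_2 R$ to obtain $\|\F^{-1}((1-\chi_R)\widehat u)\|_{L^\infty}\le CR^{-\al}\|u\|_{\mathcal C^\al}$. Next I would choose $R^\al=8^\al\|u\|_{\mathcal C^\al}/\|u\|_{H_\mu}$, which rewrites $R^2/\mu^2$ as $(8^\al\mu^{-\al}\|u\|_{\mathcal C^\al}/\|u\|_{H_\mu})^{2/\al}$, pulls a factor $2/\al$ out of the logarithm, and turns $1/(4\pi)$ into exactly $1/(2\pi\al)$, matching the constant in \eqref{H-mu}.

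The main obstacle I anticipate is the sharp tracking of constants. The coefficient $1/(4\pi)$ from the weighted integral is already sharp in two dimensions, and the amplification by $2/\al$ from the optimization in $R$ is dictated by scaling, so there is no slack left in the multiplicative constant of the logarithm. The role of the strict inequality $\la>1/(2\pi\al)$ is precisely to absorb into $C_\la$ the cross term $2AB$ from squaring $\|u\|_{L^\infty}\le A+B$, the non-sharp constant in the Besov bound, and the additive $O(\|u\|_{H_\mu}^2)$ error from comparing $\log(1+R^2/\mu^2)$ with its large-argument approximation, via the elementary fact that for any $\e>0$ one can find $C_\e$ so that $\log(x+C_\e)\ge(1-\e)\log x$ for all $x\ge 0$.
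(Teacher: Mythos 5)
The paper does not actually prove Lemma \ref{Hmu}: it is quoted verbatim from \cite[Theorem 1.3]{DlogSob}, so there is no internal proof to compare against. Judged on its own, your frequency-splitting argument is the standard route to such sharp Br\'ezis--Gallouet-type inequalities, and the constant bookkeeping is right: the two-dimensional identity $\int_{|\x|<R}(\mu^2+|\x|^2)^{-1}d\x=\pi\log(1+R^2/\mu^2)$ combined with Plancherel does give the low-frequency bound with the sharp factor $\tfrac{1}{4\pi}$, the choice $R^\al=8^\al\|u\|_{\mathcal C^\al}/\|u\|_{H_\mu}$ turns $\tfrac{1}{4\pi}\log(R^2/\mu^2)$ into $\tfrac{1}{2\pi\al}\log\bigl(8^\al\mu^{-\al}\|u\|_{\mathcal C^\al}/\|u\|_{H_\mu}\bigr)$, exactly reproducing the argument of the logarithm in \eqref{H-mu}, and the slack $\la>\tfrac{1}{2\pi\al}$ together with the free constant $C_\la$ absorbs the cross term, the non-sharp Besov constants, and the additive $O(\|u\|_{H_\mu}^2)$ remainders, as you indicate.

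The one step that does not work as written is the high-frequency bound. Because the characteristic function of a disc is not a bounded Fourier multiplier on $L^\I(\R^2)$, you cannot pass from $\|\De_j u\|_{L^\I}\lec 2^{-j\al}\|u\|_{\mathcal C^\al}$ to a bound on $\|\F^{-1}((1-\chi_R)\hat u)\|_{L^\I}$ simply by summing over $j\ge\log_2 R$: the sharp cutoff $1-\chi_R$ does not act harmlessly on the one or two Littlewood--Paley blocks straddling $|\x|=R$, so $\F^{-1}((1-\chi_R)\hat u)$ is not a genuine tail sum of blocks. This is easily repaired without losing sharpness: either (i) split with a smooth projection $S_R$ instead, keeping the crude support bound $|S_Ru(x)|\le(2\pi)^{-2}\int_{|\x|\le 2R}|\hat u|\,d\x$ for the low part (this changes $\log(R^2/\mu^2)$ only by an additive constant, so the factor $\tfrac1{4\pi}$ survives) and writing $u-S_Ru$ as an honest sum of blocks; or (ii) keep $\chi_R$ and estimate the straddling blocks crudely via $\|\F^{-1}((1-\chi_R)\widehat{\De_ju})\|_{L^\I}\le(2\pi)^{-2}\|\widehat{\De_ju}\|_{L^1}\lec\|\na\De_ju\|_{L^2}\lec\|u\|_{H_\mu}$, an additive error of exactly the size you already absorb into $C_\la$. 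With either fix the proof closes, and it is, as far as one can tell, the same mechanism as in the cited reference.
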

The above logarithmic inequalities constitute a refinement (with respect to the best constant) of that appeared in Br\'ezis-Gallouet \cite{BrGa}. 
\begin{proof}[Proof of Lemma \ref{crit-nlst}]
Without loss of generality, we may assume that $A\in(1/2,1)$. Let
$\fy\in H^1$ satisfying \eqref{H6}. We define $\mu>0$ by 
\EQ{
 2\mu^2(1+\|u\|_{H^1_x}^2) = 1-A^2,}
so that we have 
\EQ{\label{A1} 
 \|\fy\|_{H_\mu}^2\leq A_1:=(1+A^2)/2 < 1. } 
Now we can choose $\ka\in(0,1/8)$ and $\la>2/\pi$, depending only on $A_1$,
such that 
\EQ{
 2\pi\la(1/4-\ka) > 1 =  2\pi\la (1/4+\ka/2) A_1^2. }
For instance, we can choose $\ka=(1-A_1^2)/8$. 
Note that 
\EQ{
 |g(u)| \lec |u|^4(e^{4\pi|u|^2}-1).}
In particular, if $\|\fy\|_{L^\infty}\leq A_1$, then $|g(\fy)| \lec |\fy|^6$ and thanks to Sobolev embedding
\EQ{ \label{small Linfty} 
 \|g(\fy)\|_{L^{2,1}} \lec \|\fy\|_{L^{12,6}}^6
 \lec \|\fy\|_{L^{12,2}}^6 \lec \|\fy\|_{B^{1/6}_{6,2}}^6.} 
If $\|\fy\|_{L^\I}>A_1$, then we have 
\EQ{ 
 \|g(\fy)\|_{L^{2,1}}
 & \le \|e^{4\pi|\fy|^2}-1\|_{L^1}^{1/2-\ka}
 \|e^{4\pi|\fy|^2}-1\|_{L^\I}^{1/2+\ka}
  \|\fy \|_{L^{4/\ka,4/(1/2+\ka)}}^4\\
 & \le \|e^{4\pi|\fy|^2}-1\|_{L^1}^{1/2-\ka}
  \|e^{4\pi|\fy|^2}-1\|_{L^\I}^{1/2+\ka}
  \|\fy \|_{L^{4/\ka,2}}^4.}
The first factor on the right hand side in the above inequality is
bounded by the sharp Trudinger-Moser inequality. Since $\|\fy\|_{C^{1/4-\ka}}>A>1/2$ and $C_\la\ge 1$, then the  second factor is bounded by using the logarithmic inequality \eqref{H-mu}
\EQ{ \label{exponential estimate}
\|e^{4\pi|\fy|^2}-1\|_{L^\I}^{1/2+\ka} 
 \pt\le e^{2\pi(1+2\ka)\|\fy\|_{L^\I}^2}
  \le \left[C_\la+\frac{(8/\mu)^{1/4}\|\fy\|_{C^{1/4-\ka}}}{\|\fy\|_{H_\mu}}\right]^{2\pi(1+2\ka)\la\|\fy\|_{H_\mu}^2} 
 \pr\le [C_\la+(8/\mu)^{1/4}\|\fy\|_{C^{1/4-\ka}}/A_1]^{2\pi(1+2\ka)\la A_1^2}
 \pr\le (8/\mu)[4C_\la \|\fy\|_{C^{1/4-\ka}}]^{4}.}
Thus, using Sobolev embedding we obtain 
\EQ{
 \|g(\fy)\|_{L^{2,1}}
  \lec \|\fy\|_{B^{1/6}_{6,2}}^6
   + C_A(\|\fy\|_{H^1})\|\fy\|_{C^{1/4-\ka}}^{4} \|\fy\|_{B^{\ka/2}_{2/\ka,2}}^4. }
\end{proof}


\subsection{Concluding the proof of Theorem \ref{Mainresult3}.}
Thus we have so far either 
\EQ{ \label{large A foc}
 A(u;T) \ge (MT)^{-1}\e,}
or 
\EQ{
 \int_1^T \int_{\R^N} \frac{|g(u)|}{\la} dx dt \le C(\NN)E(u;0)<\I,}
where $C(\NN)>0$ is a certain uniform bound in terms of $E_0$, $f$ and $N$. 
In the latter case, we obtain from \eqref{Mor.est} in the same way as in Section \ref{defoc sub}, using the free energy bound \eqref{free eng bd} as well, 
\EQ{
 \pt\int_S^T \int_{|x|<t} \frac{|\na u|^2 + |x\dot u+t\na u|^2}{\la} + \frac{|g(u)|}{\la} dxdt 
  \prq\lec E(u;0)[1+\mu(u;T)+C(\NN)],}
for $1\le R<S=3R<T$, where $\mu$ is as defined in \eqref{def mu}. 
Then proceeding in the same way as before, we obtain
\EQ{ \label{upper bd foc}
 \int_{S}^T \frac{\|\dot u\|_{L^2_x}^2}{t} dt 
 \pt\lec (1+C_0)E(u;0)[1+\mu(u;T)+C(\NN)]  
 \pr+ E(u;0)[1+\mu(u;T)+C(\NN)]^{2/p} (R^N \log T)^{1-2/p},}
instead of \eqref{upper bd}. 
Further tracking the same way, we replace \eqref{low bd K defoc} by the following bound due to \eqref{low bd K foc}
\EQ{
 K(u) \ge \nu\|u(t)\|_{H^1}^2 \ge \nu[E(u;t)-\|u(t)\|_{L^2_x}^2],}
hence we have 
\EQ{
 (1+\nu)\|\dot u(t)\|_{L^2_x}^2 \ge \nu E(u;t) + \p_t\LR{\dot u|u}_{L^2_x} + \LR{a\dot u|u}_{L^2_x}.}
Integrate it by $dt/t$, we obtain the following instead of \eqref{u^2 whole x}:
\EQ{
 \int_{S}^T \frac{dt}{t}(1+\nu)\|\dot u(t)\|_{L^2_x}^2
  \ge \nu E(u;T)\log\frac{T}{3R} - E(u;0)/R - MA(u;T).}
Combining it with \eqref{upper bd foc}, we obtain 
\EQ{
 \nu E(u;T)\log\frac{T}{3R} \pt\lec (1+C_0)E(u;0)[1+\mu(u;T)+C(\NN)]  
 \pr+ E(u;0)[1+\mu(u;T)+C(\NN)]^{2/p} (R^N \log T)^{1-2/p}.}
Hence if 
\EQ{
 [1+MT+(a_0R)^{-1}]A(u;T) \le E(u;0)/2,}
then we have $\mu\le 1/2$, $E(u;T)\ge E(u;0)/2$ and 
\EQ{
 \nu \log T \lec \log R + (1+C_0)[1+C(\NN)] + [1+C(\NN)]^{2/p}(R^N \log T)^{1-2/p},}
which implies that 
\EQ{
 \log T \lec \nu^{-1}(1+C(\NN))(1+C_0+R^2),}
where we have chosen $p=2_\star=2+4/N$. 
Thus in conclusion there exists an absolute constant $C_*>0$ such that we have \eqref{exp dec} with 
\EQ{
 \pt \log T = C_*\nu^{-1}(1+C(\NN))(1+C_0+R^2), 
 \pr \de=\min([1+MT+(a_0R)^{-1}]^{-1}/2,(MT)^{-1}\e/E_0),}
where the last factor takes care of the case \eqref{large A foc}.  
\qedsymbol 



\end{document}